\def \bs{\backslash}
\def \ol{\overline}
\def \Per{\operatorname{Per}}
\def \R{{\mathbb R}}
\def \SL{\mathrm{SL}}
\def \SO{\operatorname{SO}}
\def \Z{{\mathbb Z}}
\def \({\left(}
\def \){\right)}
\newcommand{\e}
[1]{\emph{#1}\index{#1}}
\newcommand{\tto}
[1]{\stackrel{#1}{\longrightarrow}}
\newcommand{\NN}[3]
{\mbox{\scriptsize$\left(\begin{array}{ccc}1 & #1 & #2 \\ & 1 & #3 \\ &  & 1\end{array}\right)$}}
\newcommand{\PP}[6]
{\mbox{\scriptsize$\left(\begin{array}{ccc}#1 & #4 & #5 \\ & #2 & #6 \\ &  & #3\end{array}\right)$}}
\newtheorem{theorem}{Theorem}[section]
\newtheorem{proposition}[theorem]{Proposition}
\theoremstyle{definition}
\newtheorem{definition}[theorem]{Definition}
\begin{document}

\pagestyle{myheadings} \markright{TRILINEAR FORMS SL(3)}

\title{Invariant trilinear forms for $\SL_3(\R)$}
\author{Anton Deitmar}
\date{}
\maketitle

{\bf Abstract:} We give a detailed analysis of the orbit structure of the third power of the flag variety attached to $\SL_3(\R)$. It turns out that 36 generalized Schubert cells split into 70 orbits plus one continuous family of orbits.

$$ $$

\tableofcontents

\section*{Introduction}

Invariant trilinear forms on flag varieties give intertwining operators in the  category of smooth representations between induced representations and tensor products.
Hence the dimension of these spaces represent intertwining numbers or decomposition numbers of tensor products.
For applications, it is most interesting to consider cases, where these dimensions are finite, which corresponds to rank one situations, see \cites{AZ,BC,BKZ,Clare,CKO,CO,Clerc1,Clerc2,DKS,De1,De2,KSS,MOO,MO}.
In higher rank cases, the space of invariant trilinear forms can be infinite-dimensional.
In particular, in \cite{Koba}, Theorem C, it is shown that if $G$ is algebraic over $\R$ and if there is no open $G$-orbit in the triple product of the flag variety
$$
X=P\bs G\times P\bs G\times P\bs G,
$$
then the space of invariant trilinear forms is infinite-dimensional.
In this paper, we consider the case of the group $\SL_3(\R)$, for which we give a complete analysis of the orbit structure of the corresponding triple product of the flag variety.
It turns out, that the 36 generalized Schubert cells contain 70 isolated orbits and one continuous family of orbits.
In particular, it turns out that there is no open orbit. Hence Theorem C of \cite{Koba} says that there are representations, for which the space of invariant trilinear forms is infinite dimensional.

\section{Orbit structure}
Let $P$ be the minimal parabolic subgroup of $G=\SL_3(\R)$ consisting of all upper triangular matrices.
Then $P$ has Langlands decomposition $P=MAN$, where $A$ is the group of diagonal matrices in $G$ with positive entries, $M$ the group of diagonal matrices in $G$ with entries $\pm 1$, so $M\cong\Z/2\times\Z/2$.
Finally, $N$ is the group of all upper triangular matrices with ones on the diagonal.
Let $K=\SO(3)$. Then $K$ is a maximal compact subgroup of $G$.
We consider the compact manifold
$$
X=(P\bs G)^3=\(M\bs K\)^3.
$$

We write 
$$
D=AM
$$ 
for the group of diagonal matrices in $G$.
Let $N_G(A)$ denote the normalizer of $A$ in $G$. The Weyl group $W=N_G(A)/D$ is isomorphic to the permutation group $\Per(3)$ in three letters.
\begin{definition}
For $w\in W$ we write 
$$
P_v=P\cap v^{-1}Pv,\quad N_v=N\cap v^{-1}Nv.
$$
Since the two parabolic groups $P$ and $v^{-1}Pv$ share the same Levi component $D$, we have
$$
P_v=DN_v,\quad \text{and hence}\quad N\cap P_v=N_v.
$$
\end{definition}

The \e{Bruhat decomposition} is the disjoint decomposition of $G$,
$$
G=\bigsqcup_{w\in W} PwP=\bigsqcup_{w\in W} PwN.
$$
Every $G$-orbit in $X=\(P\bs G\)^3$ contains an element of first coordinate equal to $1$, so we get a bijection
$$
\(P\bs G\times P\bs G\times P\bs G\)/G\tto\cong \{1\}\times\( P\bs G\)^2/P.
$$
Using the Bruhat decomposition in the second and third coordinate, this gives
\begin{align*}
X/G
&\tto\cong 
\bigsqcup_{v\in W}\bigsqcup_{w\in W}\{1\}\times \Big(\(P\bs PvP\)\times P\bs PwP\Big)/P,
\\
&\cong
\bigsqcup_{v\in W}\bigsqcup_{w\in W}\(\{1\}\times \{v\}\times P\bs PwP/(P\cap v^{-1}Pv)\)\\
&\cong
\bigsqcup_{v\in W}\bigsqcup_{w\in W}\{1\}\times \{v\}\times \left[\(N_{w}\bs N/N_v\)\text{ modulo }D-\text{conjugation})\right].
\end{align*}
To justify the last step, consider the map
\begin{align*}
\phi:N&\to P\bs PwP/P_v,\quad n\mapsto PwnP_v
\end{align*}
Since $PwP=PwN$, the map $\phi$ is surjective.
Further,
\begin{align*}
\phi(n')=\phi(n)&\Leftrightarrow
Pwn'P_v=PwnP_v\\
&\Leftrightarrow
wn'=pwndn_v
\tag*{for some $d\in D$, $n_v\in N_v$}\\
&\Leftrightarrow
n'=w^{-1}qw\,n^{d}\,n_v,
\end{align*}
where we have written $n^{d}=d^{-1}nd$.
Now as $n', n^{d} n_v\in N$ we get $w^{-1}qw\in N$ and hence it lies in $N\cap w^{-1}Pw=N_{w}$.
It follows that $\phi$ induces an isomorphism
$N_w\bs N/N_v$ modulo $D$-conjugation to $P\bs PwP/P_v$.

So, defining the \e{Schubert cells}
$$
S_{v,w}=\(P\bs P\cdot 1\times P\bs PvP\times P\bs PwP\)G,
$$
we get a disjoint $G$-stable decomposition into 36 Schubert cells
$$
X=(P\bs G)^3=\bigsqcup_{v\in W}\bigsqcup_{w\in W}S_{v,w}.
$$
The dimension of a cell is
$$
\dim S_{v,w}=3+\dim [v]+\dim [w],
$$
where we have written $[v]=P\bs PvP$.
We write the elements of $W$ as $1,s_1,s_2,z_1,z_2,w_0$ where $w_0$ is the long element and the corresponding Weyl chambers are given as in the following picture
\begin{center}
\begin{tikzpicture}
\draw[thick](-2.3,0)--(2.3,0);
\draw[thick](-1.3,-2)--(1.3,2);
\draw[thick](-1.3,2)--(1.3,-2);
\draw(0,2)node{$1$};
\draw(0,-2)node{$w_0$};
\draw(2,1)node{$s_2$};
\draw(-2,1)node{$s_1$};
\draw(2,-1)node{$z_1$};
\draw(-2,-1)node{$z_2$};
\end{tikzpicture}
\end{center}
Then $s_1$ and $s_2$ generate the Weyl group and $z_1=s_2s_1$, $z_2=s_1s_2$ as well as  $w_0=s_1s_2s_1=s_2s_1s_2$.
For computations, we choose the following representatives in $G$,
\begin{align*}
s_1&=\mbox{\scriptsize$\left(\begin{array}{ccc} & 1 &  \\1 &  &  \\ &  & -1\end{array}\right)$}
&s_2&=\mbox{\scriptsize$\left(\begin{array}{ccc}-1 &  &  \\ &  & 1 \\ & 1 & \end{array}\right)$}\\
z_1&=\mbox{\scriptsize$\left(\begin{array}{ccc} & -1 &  \\ &  & -1 \\1 &  & \end{array}\right)$}
&z_2&=\mbox{\scriptsize$\left(\begin{array}{ccc} &  & 1 \\-1 &  &  \\ & -1 & \end{array}\right)$}\\
w_0&=\mbox{\scriptsize$\left(\begin{array}{ccc} &  & -1 \\ & -1 &  \\-1 &  & \end{array}\right)$}
\end{align*}

The orbit closure inclusion pattern in $P\bs G=\bigsqcup_{w\in W}P\bs PwP$ is given by 
$$
\xymatrix{
\text{0-dimensional}&&[1]\ar@{^(->}[dr]\ar@{^(->}[dl]\\
\text{ 1-dimensional}&[s_1]\ar@{^(->}[d]\ar@{^(->}[drr]&&[s_2]\ar@{_(->}[d]\ar@{_(->}[dll]\\
\text{ 2-dimensional}&[z_1]\ar@{^(->}[dr]&&[z_2]\ar@{^(->}[dl]\\
\text{ 3-dimensional}&&[w_0]&
}
$$
where the arrows indicate containment in the closure, so for instance $[1]\subset \ol{[s_1]}$.
Accordingly, this diagram repeats with the $S_{v,w}$'s, which is to say that
$$
\left.
\begin{array}{c}[v_1]\subset \ol{[v_2]}
\\
\ [w_1]\subset \ol{[w_2]}
\end{array}
\right\}\quad\Leftrightarrow\quad S_{v_1,w_1}\subset\ol{S_{v_2,w_2}}.
$$
So for instance, $S_{s_1,z_1}$ is contained in the closure of $S_{s_1,w_0}$ and in the closure of $S_{z_1,z_1}$.

For $v,w\in W$ let $R_N^{v,w}$ be a set of representatives in $N$ for the equivalence relation
$$
n\sim n'\quad\Leftrightarrow\quad 
n'=n_wdnd^{-1}n_v
$$
for some $d\in D$, $n_v\in N_v$ and $n_w\in N_w$. 
We then get a set $R$ of representatives of $X/G$ of the form
$
R=\big\{ (1,v,wn): v,w\in W,\ n\in R_N^{v,w}\big\}.
$
We can write this suggestively
$
S_{v,w}/G\ \cong\ 1\times v\times w(N/\sim).
$
The stabilizer of $(1,v,wn)$ equals
\begin{align*}
G_{(1,v,wn)}&=P\cap v^{-1}Pv\cap (wn)^{-1}Pwn\\
&= D N_v\cap n^{-1}\(w^{-1}Pw\)n.
\end{align*}

We clearly have $N_1=N$.
A computation shows that 
\begin{align*}
s_1^{-1}Ps_1&=\left(\begin{array}{ccc}* & 0  & * \\ * & * & * \\ 0&  0 & *\end{array}\right)
&s_2^{-1}Ps_2&=\left(\begin{array}{ccc}* & * & * \\ 0& * & 0 \\ 0& * & *\end{array}\right)\\
z_1^{-1}Pz_1&=\left(\begin{array}{ccc}* & 0 & 0 \\   * & * & * \\  * & 0 & *\end{array}\right)
&z_2^{-1}Pz_2&=\left(\begin{array}{ccc}* & * & 0 \\ 0 & * & 0 \\ * & * & *\end{array}\right).
\end{align*}
This implies
\begin{align*}
N_{s_1}&=\NN 0**
&N_{s_2}&=\NN **0\\
N_{z_1}&=\NN00*
&N_{z_2}&=\NN*00.
\end{align*}

We now classify all orbits by the Schubert cells.
The dimensions of the Schubert cells range from 3 to 9. The orbit dimensions cannot exceed $8=\dim G$, therefore the open cell $S_{w_0,w_0}$ must contain a continuous family of orbits. We introduce the notation
$$
n(x,y,z)=\NN xyz, \qquad d(a,b,c)=\PP abc\ \ \ .
$$
Note that
$$
n(x,y,z)^{-1}=n(-x,xz-y,-z).
$$

\begin{proposition}
The open cell $S_{w_0,w_0}$ contains a family of orbits of maximal dimension, which is 8, parametrized by $u\in\R^\times$ and given by $(1,w_0,w_0n)$ with
$n=n(1,1,u)$. 

There are $7$ more orbits: 3 orbits of maximal dimension in $S_{w_0,w_0}$ given by $n(0,1,1)$, $n(1,0,1)$ and $n(1,1,0)$, in each case the stabilizer is trivial.
There are 3 orbits of dimension 7, which we list by a representative and the corresponding stabilizer.
$$
\begin{array}{c|c}
n&\text{stabilizer of }(1,w_0,w_0n)\\
\hline
n(1,0,0)& \big\{d(a,a,1/a^2):a\in\R^\times\big\}\\
n(0,1,0)& \big\{d(a,1/a^2,a):a\in\R^\times\big\}\\
n(0,0,1)& \big\{d(1/a^2,a,a):a\in\R^\times\big\}.
\end{array}
$$
Finally, there is one orbit of dimension 6 given by $n(0,0,0)$ with stabilizer $D$.
\end{proposition}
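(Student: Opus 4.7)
The plan is to use the bijection $S_{w_0,w_0}/G \cong N/\sim$ from the preceding paragraphs, where by construction $n \sim n'$ iff $n' = n_{w_0}\, d n d^{-1}\, n'_{w_0}$ for some $d \in D$ and $n_{w_0}, n'_{w_0} \in N_{w_0}$. The crucial preliminary observation is that
$$
N_{w_0} = N \cap w_0^{-1} N w_0 = \{1\},
$$
because conjugation by $w_0$ sends upper unipotent matrices to lower unipotent matrices, and only the identity is simultaneously upper and lower unipotent. Hence the equivalence on $N$ reduces to ordinary $D$-conjugation $n \mapsto d^{-1} n d$, and the general stabilizer formula collapses to
$$
G_{(1,w_0,w_0 n)} = D \cap n^{-1}(w_0^{-1} P w_0)\,n.
$$

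Next I would compute the $D$-action in the coordinates $n(x,y,z)$. For $d = d(t_1,t_2,t_3)$ with $t_1 t_2 t_3 = 1$, a direct matrix multiplication gives
$$
d^{-1} n(x,y,z) d = n\Big(\tfrac{t_2}{t_1}x,\; \tfrac{t_3}{t_1}y,\; \tfrac{t_3}{t_2}z\Big).
$$
Setting $\al = t_2/t_1$ and $\beta = t_3/t_1$, the normalization $t_1 t_2 t_3 = 1$ becomes $t_1^3\,\al\,\beta = 1$, which is solvable for $t_1 \in \R^\times$ for every $(\al,\beta) \in (\R^\times)^2$. Thus the $D$-orbits on $\R^3$ coincide with the orbits of the two-parameter scaling $(x,y,z) \mapsto (\al x,\, \beta y,\, (\beta/\al)\,z)$.

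I would then classify orbits by a short case split on which coordinates vanish. If all three vanish, the orbit is $\{n(0,0,0)\}$ and all of $D$ stabilizes, giving orbit dimension $8 - 2 = 6$. If exactly one coordinate is nonzero, scaling it to $1$ yields the three representatives $n(1,0,0), n(0,1,0), n(0,0,1)$; for each, the fixed-point conditions on $(\al,\beta)$ cut out a one-parameter subgroup, producing the three tori $\{d(a,a,1/a^2)\}$, $\{d(a,1/a^2,a)\}$, $\{d(1/a^2,a,a)\}$ listed in the statement (orbit dimension $8-1=7$). If exactly two coordinates are nonzero, one scales both to $1$ independently, yielding the three isolated orbits $n(1,1,0), n(1,0,1), n(0,1,1)$ with trivial stabilizer and orbit dimension $8$. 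Finally, if all three are nonzero, the choice $\al = 1/x$, $\beta = 1/y$ sends $(x,y,z)$ to $(1,1,xz/y)$, and the quantity $u = xz/y$ is manifestly invariant (the scaling factors for numerator and denominator cancel), so orbits on this open stratum are parametrized by $u \in \R^\times$ via $n(1,1,u)$, again with trivial stabilizer.

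I do not foresee any real obstacle; the argument is a clean case analysis once the reduction $N_{w_0} = \{1\}$ is in hand. The only point worth an explicit line of justification is that the invariant $u = xz/y$ actually separates the open-stratum orbits, which is immediate from its invariance together with the normalization step, which produces a unique representative of the form $n(1,1,u)$ per orbit.
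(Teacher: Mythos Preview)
Your proposal is correct and follows the same approach as the paper: reduce to $D$-conjugation on $N$ via $N_{w_0}=\{1\}$, then classify orbits and stabilizers. The paper's own proof is much terser---it simply asserts that the listed elements form a set of representatives for $N$ modulo $D$-conjugation and verifies triviality of the stabilizer for $n(1,1,u)$ by checking when $n(1,1,u)\,d\,n(1,1,u)^{-1}\in D$---whereas you spell out the scaling action and the case split explicitly, which is a welcome elaboration rather than a different method.
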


\begin{proof}
The given elements form a set of representatives of $N$ modulo $D$-conjugation, so they parametrize the orbits in the cell.
The inverse of $n(1,1,u)$ is $n(-1,u-1,-u)$ and one notes that for given $d\in D$ in order to have $n(1,1,u)\,d\,n(-1,u-1,-u)\in D$, one must have $d=1$.
This implies the triviality of the stabilizer. The other cases are treated similarly.
\end{proof}

\begin{proposition}
The cell $S_{w_0,z_1}$ contains 4 orbits which are listed by representative $(1,w_0,z_1n)$, dimension of orbit, and stabilizer.
$$
\begin{array}{c|c|c}
n&\text{dimension}&\text{stabilizer}\\
\hline
n(1,1,0)&8&\big\{1\big\}
\\
n(1,0,0)&7&\big\{d(a,a,1/a^2):a\in\R^\times\big\}
\\
n(0,1,0)&7&\big\{d(a,1/a^2,a):a\in\R^\times\big\}
\\
n(0,0,0)&6&D
\end{array}
$$
\end{proposition}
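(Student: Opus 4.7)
The plan mirrors the proof of the previous proposition, using the general description of orbits in $S_{v,w}$ as $D$-conjugacy classes in $N_v\bs N/N_w$ together with a direct stabilizer computation.

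First I would identify the building blocks. Since $w_0$ is the long element, $w_0^{-1}Nw_0$ is the opposite unipotent, so $N_{w_0}=\{1\}$; the table preceding the proposition records $N_{z_1}=\{n(0,0,z):z\in\R\}$. Hence every double coset in $N_{w_0}\bs N/N_{z_1}$ has a unique representative of the form $n(x,y,0)$, and the orbit problem reduces to classifying pairs $(x,y)\in\R^2$ under the remaining $D$-conjugation action.

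Next I would compute that action explicitly. A direct calculation gives
\[
d(a,b,c)^{-1}\,n(x,y,0)\,d(a,b,c)=n\!\left(\tfrac{b}{a}x,\ \tfrac{c}{a}y,\ 0\right),
\]
so the two coordinates scale independently by $b/a$ and $c/a$; subject to the constraint $abc=1$, the pair $(b/a,c/a)$ ranges over all of $(\R^\times)^2$. The orbits are therefore indexed by which of $x,y$ vanish, yielding exactly the four representatives $n(1,1,0)$, $n(1,0,0)$, $n(0,1,0)$, $n(0,0,0)$ listed in the table.

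Finally, for the stabilizer of $(1,w_0,z_1 n)$, the general formula gives
\[
G_{(1,w_0,z_1 n)}=P\cap w_0^{-1}Pw_0\cap n^{-1}z_1^{-1}Pz_1\,n=D\cap n^{-1}z_1^{-1}Pz_1\,n,
\]
using $P\cap w_0^{-1}Pw_0=D$. For $n=n(x,y,0)$ and $d=d(a,b,c)$, a short matrix computation produces
\[
n\,d\,n^{-1}=\begin{pmatrix}a & x(b-a) & y(c-a)\\ 0 & b & 0\\ 0 & 0 & c\end{pmatrix},
\]
and comparison with the shape of $z_1^{-1}Pz_1$ from the table (whose $(1,2)$ and $(1,3)$ entries vanish) reduces the membership condition to $x(b-a)=y(c-a)=0$. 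Combined with $abc=1$, this instantly yields the four stabilizers claimed, with dimensions $0,1,1,2$, and hence orbit dimensions $8,7,7,6$ via $\dim G-\dim(\text{stab})$. The only point requiring care is keeping the matrix bookkeeping consistent with the chosen representatives of $W$; there is no conceptual obstacle.
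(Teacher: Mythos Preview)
Your proof is correct and follows exactly the approach the paper indicates (``Similar to the last proposition''): reduce to representatives of $N$ modulo the double-coset action of $N_{w_0}$, $N_{z_1}$ and $D$-conjugation, then compute stabilizers via $ndn^{-1}\in z_1^{-1}Pz_1$. Your version is simply more explicit than the paper's one-line reference.
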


\begin{proof}
Similar to the last proposition.
\end{proof}

\begin{proposition}
The cell $S_{w_0,z_2}$ contains 4 orbits which are listed by representative $(1,w_0,z_2n)$, dimension of orbit, and stabilizer.
$$
\begin{array}{c|c|c}
n&\text{dimension}&\text{stabilizer}\\
\hline
n(0,1,1)&8&\big\{1\big\}
\\
n(0,0,1)&7&\big\{d(1/a^2,a,a):a\in\R^\times\big\}
\\
n(0,1,0)&7&\big\{d(a,1/a^2,a):a\in\R^\times\big\}
\\
n(0,0,0)&6&D
\end{array}
$$
\end{proposition}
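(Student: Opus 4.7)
The plan is to apply the orbit/equivalence bijection set up at the start of the section. For the cell $S_{w_0, z_2}$, orbits correspond to equivalence classes in $N$ under
$$
n \sim n' \quad\Leftrightarrow\quad n' = n_{w_0}\, d\, n\, d^{-1}\, n_{z_2}
$$
for some $d \in D$, $n_{w_0} \in N_{w_0}$, $n_{z_2} \in N_{z_2}$, and then the stabilizer of $(1, w_0, z_2 n)$ is $G_{(1, w_0, z_2 n)} = D N_{w_0} \cap n^{-1}(z_2^{-1} P z_2) n$.

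First I determine the two subgroups involved. Conjugation by $w_0$ sends $N$ onto the opposite unipotent $\ol N$, so $N_{w_0} = N \cap \ol N = \{1\}$; from the table in the excerpt, $N_{z_2} = \{n(t, 0, 0) : t \in \R\}$. Writing $n = n(x, y, z)$, right multiplication by $n(t, 0, 0)$ acts as $(x, y, z) \mapsto (x + t, y, z)$, while conjugation by $d = \diag(d_1, d_2, d_3)$ acts as $(x, y, z) \mapsto (x\, d_1/d_2,\, y\, d_1/d_3,\, z\, d_2/d_3)$.

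Using $N_{z_2}$ I first normalize $x = 0$. The remaining $D$-conjugation rescales $(y, z)$ by $(d_1/d_3,\, d_2/d_3)$; subject only to $d_1 d_2 d_3 = 1$, the pair $(d_1/d_3, d_2/d_3)$ ranges independently over $\R^\times \times \R^\times$. Thus the only invariants of $(y, z)$ are the vanishing of each coordinate, giving exactly the four representatives $n(0, 1, 1)$, $n(0, 1, 0)$, $n(0, 0, 1)$, $n(0, 0, 0)$.

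For the stabilizers, since $N_{w_0} = \{1\}$ I look for $d \in D$ with $n d n^{-1}$ in the shape of $z_2^{-1} P z_2$ recorded in the excerpt; equivalently the $(1, 3)$ and $(2, 3)$ entries of $n d n^{-1}$ must vanish. Using $n(x, y, z)^{-1} = n(-x,\, xz - y,\, -z)$ and expanding case by case, these two vanishing conditions yield simple linear relations among $d_1, d_2, d_3$ which, combined with $d_1 d_2 d_3 = 1$, pin down the stabilizers listed in the table. The orbit dimensions then follow from $\dim G - \dim(\mathrm{stab}) = 8, 7, 7, 6$. These matrix computations are routine; the only genuinely delicate point is the orbit parametrization, hinging on the identification $N_{w_0} = \{1\}$ and on the observation that the $N_{z_2}$-action precisely absorbs the $(1, 2)$ entry, leaving the $(1, 3)$ and $(2, 3)$ entries as the only coordinates still to be rescaled by $D$.
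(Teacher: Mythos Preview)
Your proof is correct and follows exactly the approach the paper sets up and uses for the preceding propositions (the paper itself omits the proof here, relying on ``similar to the last proposition''). Your identification of $N_{w_0}=\{1\}$ and $N_{z_2}=\{n(t,0,0)\}$, the normalization $x=0$, the observation that $D$-conjugation rescales $(y,z)$ by an arbitrary pair in $(\R^\times)^2$, and the stabilizer computation via the vanishing of the $(1,3)$ and $(2,3)$ entries of $ndn^{-1}$ all match the intended argument.
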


\begin{proposition}
\begin{itemize}
\item The cell $S_{w_0,s_1}$ contains 2 orbits
$$
\begin{array}{c|c|c}
n&\text{dimension}&\text{stabilizer}\\
\hline
n(1,0,0)&7&\big\{d(a,a,1/a^2):a\in\R^\times\big\}
\\
n(0,0,0)&6&D
\end{array}
$$
\item The cell $S_{w_0,s_2}$ contains 2 orbits
$$
\begin{array}{c|c|c}
n&\text{dimension}&\text{stabilizer}\\
\hline
n(0,0,1)&7&\big\{d(1/a^2,a,a):a\in\R^\times\big\}
\\
n(0,0,0)&6&D
\end{array}
$$
\item The cell $S_{w_0,1}$ equals one orbit given by $(1,w_0,1)$, the dimension is 6 and the stabilizer is $D$.
\end{itemize}
\end{proposition}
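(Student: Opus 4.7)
The plan is to follow the template of the preceding propositions: for each cell $S_{w_0,w}$, classify elements of $N$ under the equivalence $n\sim n_{w_0}\,d\,n\,d^{-1}\,n_w$, and then read off the stabilizer of $(1,w_0,wn)$ from $G_{(1,w_0,wn)}=DN_{w_0}\cap n^{-1}(w^{-1}Pw)n$. The key preliminary reduction is to note that $N_{w_0}=N\cap w_0^{-1}Nw_0=\{1\}$, because $w_0^{-1}Nw_0$ is the group of lower unipotent matrices; this collapses the equivalence to right multiplication by $N_w$ together with $D$-conjugation, and the stabilizer to $D\cap n^{-1}(w^{-1}Pw)n$.

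For $S_{w_0,s_1}$ I take $N_{s_1}=\{n(0,y',z')\}$ and compute $n(x,y,z)\cdot n(0,y',z')=n(x,\,y+xz'+y',\,z+z')$, so right multiplication by $N_{s_1}$ kills $y$ and $z$, leaving the unique representative $n(x,0,0)$ in each coset. Since $D$-conjugation sends $n(x,0,0)$ to $n(xb/a,0,0)$, we get exactly two orbits, $x=0$ and $x=1$. For the stabilizer at $n(1,0,0)$ I compute $n(1,0,0)\,d(a,b,c)\,n(1,0,0)^{-1}$; its only nonzero off-diagonal entry is $b-a$ at position $(1,2)$, so comparison with the block shape of $s_1^{-1}Ps_1$ tabulated above forces $b=a$, giving $\{d(a,a,1/a^2):a\in\R^\times\}$. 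At $n=1$ the stabilizer is all of $D$, since $s_1^{-1}Ps_1$ contains $D$ (as $D$ is normalized by $s_1$ and already lies in $P$).

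The cell $S_{w_0,s_2}$ is entirely symmetric: $N_{s_2}=\{n(x',y',0)\}$ reduces representatives to $n(0,0,z)$, $D$-conjugation rescales $z$ by $c/b$, and a parallel computation with $n(0,0,1)$ produces a matrix whose single forbidden nonzero entry sits at $(2,3)$ and equals $c-b$, so the stabilizer is $\{d(1/a^2,a,a):a\in\R^\times\}$. For $S_{w_0,1}$ we have $N_1=N$, hence a single coset represented by $n=1$, stabilizer $D\cap P=D$, and orbit dimension $\dim G-\dim D=6$. I expect no genuine obstacle; the argument in each case is a routine $3\times 3$ matrix calculation, and the only mild point requiring attention is identifying, from the explicit shape of each $s_i^{-1}Ps_i$ listed earlier, the one off-diagonal entry whose vanishing pins down the stabilizer.
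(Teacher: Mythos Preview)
Your argument is correct and is exactly the approach the paper intends: the paper does not write out a separate proof for this proposition, relying instead on the template set in the proof of Proposition~1.1 (representatives for $N$ modulo the equivalence, then reading off stabilizers), and you have carried out precisely those computations with $N_{w_0}=\{1\}$ collapsing the equivalence to $D$-conjugation and one-sided $N_w$-multiplication. The matrix checks for the $(1,2)$ and $(2,3)$ entries against the tabulated shapes of $s_1^{-1}Ps_1$ and $s_2^{-1}Ps_2$ are exactly right.
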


\begin{proposition}
\begin{itemize}
\item The cell $S_{z_1,z_1}$ contains 3 orbits
$$
\begin{array}{c|c|c}
n&\text{dimension}&\text{stabilizer}\\
\hline
n(1,0,0)&7&\big\{ d(a,a,1/a^2): a\in\R^\times\big\}
\\
n(0,1,0)&6&\left\{
\PP a{1/a^2}a\ \ x: a\in\R^\times;\ x\in\R\right\}
\\
n(0,0,0)&5&D\NN \ \ * 
\end{array}
$$
\item The cell $S_{z_1,z_2}$ contains 2 orbits
$$
\begin{array}{c|c|c}
n&\text{dimension}&\text{stabilizer}\\
\hline
n(0,1,0)&7&\big\{ d(a,1/a^2,a):a\in\R^\times\big\}
\\
n(0,0,0)&6&D\ \ 
\end{array}
$$
\item The cell $S_{z_1,s_1}$ contains 2 orbits
$$
\begin{array}{c|c|c}
n&\text{dimension}&\text{stabilizer}\\
\hline
n(1,0,0)&6&\left\{
\PP aa{1/a^2}\ \ x: a\in\R^\times;\ x\in\R\right\}
\\
n(0,0,0)&5&D\NN \ \ *
\end{array}
$$
\item
The cell $S_{z_1,s_2}$ is one orbit and we have
$$
\begin{array}{c|c|c}
n&\text{dimension}&\text{stabilizer}\\
\hline
n(0,0,0)&6&D 
\end{array}
$$
\item The cell $S_{z_1,1}$ is one orbit with
$$
\begin{array}{c|c|c}
n&\text{dimension}&\text{stabilizer}\\
\hline
n(0,0,0)&5&D\NN \ \ * 
\end{array}
$$
\end{itemize}
\end{proposition}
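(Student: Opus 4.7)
The plan is to treat each of the five cells in the same way as the preceding propositions. For $v=z_1$ fixed and each choice of $w$, I enumerate representatives of $N_{z_1}\bs N/N_w$ modulo $D$-conjugation, and then compute the stabilizer $G_{(1,z_1,wn)} = P_{z_1}\cap n^{-1}w^{-1}Pwn$ for each.

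I begin by recording the three relevant actions on $n = n(x,y,z)$. A direct matrix multiplication yields
\begin{align*}
n(x',y',z')\,n(x,y,z) &= n(x+x',\,y+y'+x'z,\,z+z'),\\
n(x,y,z)\,n(x'',y'',z'') &= n(x+x'',\,y+y''+xz'',\,z+z''),\\
d(a,b,c)\,n(x,y,z)\,d(a,b,c)^{-1} &= n(ax/b,\,ay/c,\,bz/c),
\end{align*}
with $abc=1$ and each of the ratios $a/b$, $a/c$, $b/c$ ranging freely over $\R^\times$.

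I then reduce cell by cell. In $S_{z_1,z_1}$ both $n_v$ and $n_w$ lie in $N_{z_1}$; right multiplication by $n(0,0,\beta)$ kills $y$ whenever $x\neq 0$, both actions freely shift $z$, and $D$-scaling collapses the nonzero parameters, yielding the three reps $n(1,0,0), n(0,1,0), n(0,0,0)$. In $S_{z_1,z_2}$, right multiplication by $n(\beta,0,0)$ only adjusts $x$ (no cross-term), so $y$ is the single invariant and there are two orbits. In $S_{z_1,s_1}$, right multiplication by $n(0,y'',z'')$ uses the coupling $y\mapsto y+y''+xz''$ to set both $y$ and $z$ to zero, leaving $x$ alone, hence two orbits. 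In $S_{z_1,s_2}$ the right action by $n(x'',y'',0)$ sets $x$ and $y$ while the left action sets $z$, collapsing everything to $n(0,0,0)$; and $S_{z_1,1}$ is trivially a single orbit.

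For each representative I compute the stabilizer directly from $G_{(1,z_1,wn)} = P_{z_1}\cap n^{-1}w^{-1}Pwn$. Writing a generic element of $P_{z_1}=DN_{z_1}$ as the upper-triangular matrix with diagonal $(a,b,c)$ subject to $abc=1$ and a single free entry in the $(2,3)$-slot, and demanding that $ngn^{-1}$ match the explicit zero pattern of $w^{-1}Pw$ recorded earlier, reduces each problem to a small linear system. For instance, for $n(0,1,0)\in S_{z_1,z_1}$ the $(1,3)$-entry of $ngn^{-1}$ comes out as $c-a$, forcing $c=a$ and $b=1/a^2$ with the $(2,3)$-slot free: this is exactly the two-parameter group in the table. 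The other stabilizers fall out the same way. The main obstacle, though a mild one, is tracking which upper-right slot survives in each case (for $n(1,0,0)\in S_{z_1,z_1}$ it is again $(2,3)$, while in $S_{z_1,z_2}$ both $(1,3)$ and $(2,3)$ are pinned down); once this bookkeeping is in place, every case is a linear elimination in at most three unknowns.
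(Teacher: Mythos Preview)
Your proof is correct and follows exactly the approach the paper uses (and leaves implicit for this proposition): enumerate representatives for $N_v\backslash N/N_w$ modulo $D$-conjugation via the explicit multiplication and conjugation formulas, then compute each stabilizer as $P_{z_1}\cap n^{-1}w^{-1}Pwn$ by matching $ngn^{-1}$ against the recorded zero pattern of $w^{-1}Pw$. One minor slip in your parenthetical: for $n(1,0,0)$ in $S_{z_1,z_1}$ the $(2,3)$-parameter of $g\in P_{z_1}$ does \emph{not} survive---the $(1,3)$-entry of $ngn^{-1}$ equals $t$ and the $(1,2)$-entry equals $b-a$, so both are forced to zero, leaving only the one-dimensional group $\{d(a,a,1/a^2)\}$ as in the table.
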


\begin{proposition}
\begin{itemize}
\item The cell $S_{z_2,z_2}$ contains 3 orbits
$$
\begin{array}{c|c|c}
n&\text{dimension}&\text{stabilizer}\\
\hline
n(0,0,1)&7&\big\{
d(1/a^2,a,a) : a\in\R^\times\big\}
\\
n(0,1,0)&6&\left\{
\PP a{1/a^2}az\ \  : a\in\R^\times;\ z\in\R\right\}
\\
n(0,0,0)&5&D\NN *\ \ 
\end{array}
$$
\item The cells $S_{z_2,s_1}$ and $S_{z_2,1}$ both are one orbit each. For $S_{z_2,s_1}$ we have
$$
\begin{array}{c|c|c}
n&\text{dimension}&\text{stabilizer}\\
\hline
n(0,0,0)&6&D 
\end{array}
$$
and for $S_{z_2,1}$ it is
$$
\begin{array}{c|c|c}
n&\text{dimension}&\text{stabilizer}\\
\hline
n(0,0,0)&5&D \NN *\ \ 
\end{array}
$$
\item The cell $S_{z_2,s_2}$ contains 2 orbits
$$
\begin{array}{c|c|c}
n&\text{dimension}&\text{stabilizer}\\
\hline
n(0,0,1)&6&\left\{
\PP {1/a^2}aax\ \ :a\in\R^\times;\ x,y\in\R\right\}
\\
n(0,0,0)&5&D\NN *\ \ 
\end{array}
$$
\end{itemize}
\end{proposition}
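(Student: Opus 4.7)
The plan is to mirror the strategy of the preceding propositions: for each sub-cell $S_{z_2,w}$ with $w\in\{z_2,s_2,s_1,1\}$, first pick canonical representatives $n\in N$ for the equivalence relation on $N_w\bs N/N_{z_2}$ modulo $D$-conjugation, and then compute the stabilizer
\[ G_{(1,z_2,z_2n)}\;=\;DN_{z_2}\cap n^{-1}\bigl(z_2^{-1}Pz_2\bigr)n, \]
from which the orbit dimension follows as $8-\dim G_{(1,z_2,z_2n)}$.

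For the orbit classification I would start from the multiplication rule
\[ n(x_0,y_0,z_0)\,n(x,y,z)=n(x_0+x,\ y_0+x_0z+y,\ z_0+z), \]
together with $N_{z_2}=\{n(x,0,0)\}$, $N_{s_1}=\{n(0,y,z)\}$, $N_{s_2}=\{n(x,y,0)\}$, and the $D$-conjugation rule $d(a,b,c)\,n(x,y,z)\,d(a,b,c)^{-1}=n(\tfrac{a}{b}x,\tfrac{a}{c}y,\tfrac{b}{c}z)$. The cells $S_{z_2,1}$ and $S_{z_2,s_1}$ collapse at once: $N_1=N$ acts transitively, and for $w=s_1$ the joint left/right action of $N_{s_1}$ and $N_{z_2}$ sweeps out all three coordinates of $N$. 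The cell $S_{z_2,s_2}$ reduces quickly to the $D$-orbits on the surviving coordinate $z$, producing exactly the two representatives $n(0,0,1)$ and $n(0,0,0)$. The more delicate case is $S_{z_2,z_2}$: here left and right multiplication can effect only $x\mapsto x+x_0+x_1$ and $y\mapsto y+x_0z$, while $z$ is invariant. Hence $y$ can be killed when $z\neq0$ but not when $z=0$, which, after one final $D$-conjugation, forces precisely the three representatives $n(0,0,1)$, $n(0,1,0)$, $n(0,0,0)$ listed in the statement.

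For the stabilizers I would use the explicit forms of $z_2^{-1}Pz_2$, $s_1^{-1}Ps_1$ and $s_2^{-1}Ps_2$ already tabulated earlier in the paper, conjugate by the chosen $n$, and match entries against a general element $d(a,b,c)\,n(x,0,0)\in DN_{z_2}$. This yields a small linear system on the matrix entries whose solution set parametrizes the stabilizer, and the dimension count then determines the orbit dimension. The main point to watch will be the off-diagonal couplings in $n^{-1}(z_2^{-1}Pz_2)n$ and $n^{-1}(s_2^{-1}Ps_2)n$ for the non-identity representatives: for $n(0,1,0)\in S_{z_2,z_2}$ and $n(0,0,1)\in S_{z_2,s_2}$ the $(1,3)$-entry equation couples with the $(1,2)$-entry, producing the one-parameter unipotent extensions recorded in the tables, while the diagonal equations force $b=c$ in the first case and $a=c$ in the second. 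Apart from this bookkeeping, each verification is a routine check on $3\times3$ matrices of the kind carried out in the first proposition of the section.
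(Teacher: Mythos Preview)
Your approach is exactly the one the paper uses (and for this particular proposition the paper gives no separate proof, relying on the template set up for $S_{w_0,w_0}$): classify representatives of $N_w\backslash N/N_{z_2}$ modulo $D$-conjugation, then compute the stabilizer $DN_{z_2}\cap n^{-1}(w^{-1}Pw)n$ by matrix matching. One small slip to fix in your last paragraph: the diagonal constraints are swapped. For $n(0,1,0)\in S_{z_2,z_2}$ the conjugation forces the first and third diagonal entries to coincide (hence $d(a,1/a^2,a)$), while for $n(0,0,1)\in S_{z_2,s_2}$ it forces the second and third to coincide (hence $d(1/a^2,a,a)$); you wrote ``$b=c$ in the first case and $a=c$ in the second,'' which is the reverse.
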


\begin{proposition}
\begin{itemize}
\item
The cell $S_{s_1,s_1}$ contains 2 orbits
$$
\begin{array}{c|c|c}
n&\text{dimension}&\text{stabilizer}\\
\hline
n(1,0,0)&5&\left\{
\PP aa{1/a^2}\ yz : a\in\R^\times;\ x,y\in\R\right\}\\
n(0,0,0)&4&\left\{
\PP ab{1/ab}\ yz : a,b\in\R^\times;\ x,y\in\R\right\} 
\end{array}
$$
\item
The cell $S_{s_1,s_2}$ is one orbit
$$
\begin{array}{c|c|c}
n&\text{dimension}&\text{stabilizer}\\
\hline
n(0,0,0)&5&\left\{
\PP ab{1/ab}\ y\ : a,b\in\R^\times;\ y\in\R\right\}
\end{array}
$$
\item
The cell $S_{s_1,1}$ is one orbit
$$
\begin{array}{c|c|c}
n&\text{dimension}&\text{stabilizer}\\
\hline
n(0,0,0)&4&\left\{
\PP ab{1/ab}\ yz : a,b\in\R^\times;\ y,z\in\R\right\}
\end{array}
$$
\item
The cell $S_{s_2,s_2}$ contains 2 orbits
$$
\begin{array}{c|c|c}
n&\text{dimension}&\text{stabilizer}\\
\hline
n(0,0,1)&5&\left\{
\PP {1/a^2}aaxy\ : a\in\R^\times;\ x,y\in\R\right\}\\
n(0,0,0)&4&\left\{
\PP ab{1/ab}xy\  : a,b\in\R^\times;\ x,y\in\R\right\}
\end{array}
$$
\item
The cell $S_{s_2,1}$ is one orbit
$$
\begin{array}{c|c|c}
n&\text{dimension}&\text{stabilizer}\\
\hline
n(0,0,0)&4&\left\{
\PP ab{1/ab}xy\ : a,b\in\R^\times;\ x,y\in\R\right\}
\end{array}
$$
\item The cell $S_{1,1}$ is one orbit of dimension 3. The stabilizer is $P$.
\end{itemize}
\end{proposition}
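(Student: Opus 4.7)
The plan is to apply the general recipe established earlier: orbits in $S_{v,w}$ are in bijection with the equivalence classes of $n\in N$ under the relation $n\sim n_v\,d\,n\,d^{-1}\,n_w$ (with $n_v\in N_v$, $n_w\in N_w$, $d\in D$), and the stabilizer of $(1,v,wn)$ equals $DN_v\cap n^{-1}(w^{-1}Pw)n$. I use $N_{s_1}=\{n(0,y,z)\}$, $N_{s_2}=\{n(x,y,0)\}$, $N_1=N$, together with the multiplication law $n(x_1,y_1,z_1)n(x_2,y_2,z_2)=n(x_1+x_2,\,y_1+y_2+x_1z_2,\,z_1+z_2)$.

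For $S_{s_1,s_1}$, left multiplication by $n(0,y_l,z_l)\in N_{s_1}$ reduces any $n(x,y,z)$ to $n(x,0,0)$, and $D$-conjugation rescales $x\mapsto (a/b)x$, giving the representatives $n(0,0,0)$ and $n(1,0,0)$. The cell $S_{s_2,s_2}$ is treated symmetrically in $z$, via left multiplication by $N_{s_2}$ and the rescaling $z\mapsto (b/c)z$, yielding $n(0,0,0)$ and $n(0,0,1)$. For $S_{s_1,s_2}$, left multiplication by $N_{s_1}$ kills $y$ and $z$ while right multiplication by $N_{s_2}$ kills $x$, so the double coset collapses to a single point. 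Any cell with $v=1$ or $w=1$ is automatically a single orbit since $N_1=N$, which covers $S_{s_1,1}$, $S_{s_2,1}$, and $S_{1,1}$.

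The stabilizers are then read off by intersecting $DN_v$ with $n^{-1}(w^{-1}Pw)n$, using the tabulated matrix forms of $s_1^{-1}Ps_1$, $s_2^{-1}Ps_2$, and $P$. At $n=n(0,0,0)$ in $S_{s_1,s_1}$, $S_{s_2,s_2}$, and $S_{s_i,1}$, the required zero entries are already present in $DN_{s_i}$, so the stabilizer is the full $DN_{s_i}$, of dimension $4$. In $S_{s_1,s_2}$, the zero in position $(2,3)$ of $s_2^{-1}Ps_2$ forces the $z$-component of $N_{s_1}$ to vanish, cutting the dimension to $3$. For the nontrivial representatives $n(1,0,0)$ in $S_{s_1,s_1}$ and $n(0,0,1)$ in $S_{s_2,s_2}$, the condition $d\,n\,d^{-1}\in N_v$ pins the diagonal of $D$ down to $a=b$ respectively $b=c$, again dropping one dimension. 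Finally, $S_{1,1}$ has stabilizer $P$ of dimension $5$ and orbit of dimension $3$.

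The main work is routine but careful matrix bookkeeping: tracking simultaneously the Heisenberg multiplication in $N$, the $D$-conjugation action on each coordinate of $n(x,y,z)$, and the entry-wise constraints imposed on $DN_v$ by membership in $n^{-1}(w^{-1}Pw)n$. No idea is needed beyond those used in the preceding propositions.
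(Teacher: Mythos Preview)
Your proof is correct and follows exactly the approach the paper uses for the earlier Schubert cells: parametrize orbits by $N_w\backslash N/N_v$ modulo $D$-conjugation using the explicit forms of $N_{s_1},N_{s_2},N_1$, then read off each stabilizer as $DN_v\cap n^{-1}(w^{-1}Pw)n$ from the tabulated matrix shapes. One small slip: for the nontrivial representatives you write ``the condition $d\,n\,d^{-1}\in N_v$'', but what actually forces $a=b$ (resp.\ $b=c$) is $d\,n\,d^{-1}\in nN_v$, equivalently $d^{-1}nd\,n^{-1}\in N_w$; your conclusion is unaffected.
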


\begin{proposition}
In $X$, there is one family of orbits parametrized by $u\in\R^\times$ and 70 more orbits.
These are distributed over the Schubert cells as in the first of the following tables.
The second table gives the dimensions of the orbits in each cell. For instance, $6,7^2,8$ stands for one cell of dimension 6, two of dimension 7 and one of dimension 8.
$$
\begin{array}{c|c|c|c|c|c|c}
&1&s_1&s_2&z_1&z_2&w_0\\
\hline
1&1&1&1&1&1&1\\
\hline
s_1&1&2&1&2&1&2\\
\hline
s_2&1&1&2&1&2&2\\
\hline
z_1&1&2&1&3&2&4\\
\hline
z_2&1&1&2&2&3&4\\
\hline
w_0&1&2&2&4&4&7
\end{array}
\qquad
\begin{array}{c|c|c|c|c|c|c}
&1&s_1&s_2&z_1&z_2&w_0\\
\hline
1&3&4&4&5&5&6\\
\hline
s_1&4&4, 5&5&5, 6&6&6,7\\
\hline
s_2&4&5&4,5&6&5,6&6,7\\
\hline
z_1&5&5,6&6&5,6,7&6,7&6,7^2,8\\
\hline
z_2&5&6&5,6&6,7&5,6,7&6,7^2,8\\
\hline
w_0&6&6,7&6,7&6,7^2,8&6,7^2,8&6,7^3,8^3
\end{array}
$$
\end{proposition}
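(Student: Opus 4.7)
The plan is to read this proposition off from the preceding six as a bookkeeping exercise. Those propositions explicitly enumerate, for each of the 21 cells $S_{v,w}$ with $v$ at or after $w$ in the total order $w_0,z_1,z_2,s_1,s_2,1$, the orbits together with their dimensions and stabilizers (and, in $S_{w_0,w_0}$, the continuous family). My first step is simply to transcribe these data into the corresponding entries of the two tables.

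The remaining 15 cells, those with $v$ strictly before $w$ in that order, are filled in by symmetry. The $S_3$-action on $X=(P\bs G)^3$ by permutation of the three factors commutes with the diagonal $G$-action, and the transposition $(23)$ sends $\{1\}\times[v]\times[w]$ to $\{1\}\times[w]\times[v]$; hence it induces a $G$-equivariant homeomorphism $S_{v,w}\cong S_{w,v}$. Therefore the number of $G$-orbits in $S_{v,w}$, the multiset of their dimensions, and their stabilizers up to isomorphism are all symmetric in $(v,w)$, which is precisely the diagonal symmetry displayed in both tables. No new orbit computations are needed.

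With both tables populated, the totals follow by addition: summing the rows of the orbit-count table gives $6+9+9+13+13+20=70$ isolated orbits, and $S_{w_0,w_0}$ additionally contributes the unique $\R^\times$-family from the first proposition. Each entry of the dimension table is then read off with multiplicities encoded as exponents; for instance $S_{w_0,w_0}$ displays $6,7^3,8^3$, matching the one orbit of dimension 6, three of dimension 7, and three isolated orbits of dimension 8 found there. The only real obstacle is thus the purely clerical one of ensuring that the $(23)$-reflection has been applied cell by cell without error and that no dimension has been miscounted; beyond this, the statement requires no new Lie-theoretic content.
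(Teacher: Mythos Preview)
Your argument is correct and is exactly the paper's own proof: the paper says in one line that the tables follow from the previous propositions together with the flip $(x,y,z)\mapsto(x,z,y)$, which is precisely your $(23)$-symmetry between $S_{v,w}$ and $S_{w,v}$. Your added sanity check that the row sums give $6+9+9+13+13+20=70$ is a welcome verification but not a different idea.
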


\begin{proof}
This follows from the previous propositions together with the observation that the  orbits structure of $S_{v,w}$ is the same as that of $S_{w,v}$ because of the flip $X\to X$, $(x,y,z)\mapsto (x,z,y)$.
\end{proof}

\begin{bibdiv} \begin{biblist}

\bib{AZ}{article}{
   author={Anantharaman, Nalini},
   author={Zelditch, Steve},
   title={Intertwining the geodesic flow and the Schr\"odinger group on
   hyperbolic surfaces},
   journal={Math. Ann.},
   volume={353},
   date={2012},
   number={4},
   pages={1103--1156},
   issn={0025-5831},
}

\bib{BC}{article}{
   author={Beckmann, Ralf},
   author={Clerc, Jean-Louis},
   title={Singular invariant trilinear forms and covariant (bi-)differential
   operators under the conformal group},
   journal={J. Funct. Anal.},
   volume={262},
   date={2012},
   number={10},
   pages={4341--4376},
   issn={0022-1236},
}

\bib{BKZ}{article}{
   author={Ben Said, Salem},
   author={Koufany, Khalid},
   author={Zhang, Genkai},
   title={Invariant trilinear forms on spherical principal series of real
   rank one semisimple Lie groups},
   journal={Internat. J. Math.},
   volume={25},
   date={2014},
   number={3},
   pages={1450017, 35},
   issn={0129-167X},
}

\bib{Clare}{article}{
   author={Clare, Pierre},
   title={Invariant trilinear forms for spherical degenerate principal
   series of complex symplectic groups},
   journal={Internat. J. Math.},
   volume={26},
   date={2015},
   number={13},
   pages={1550107, 16},
   issn={0129-167X},
   review={\MR{3435965}},
}

\bib{CKO}{article}{
   author={Clerc, Jean-Louis},
   author={Kobayashi, Toshiyuki},
   author={\O rsted, Bent},
   author={Pevzner, Michael},
   title={Generalized Bernstein-Reznikov integrals},
   journal={Math. Ann.},
   volume={349},
   date={2011},
   number={2},
   pages={395--431},
   issn={0025-5831},
   review={\MR{2753827}},
}

\bib{CO}{article}{
   author={Clerc, Jean-Louis},
   author={\O rsted, Bent},
   title={Conformally invariant trilinear forms on the sphere},
   language={English, with English and French summaries},
   journal={Ann. Inst. Fourier (Grenoble)},
   volume={61},
   date={2011},
   number={5},
   pages={1807--1838 (2012)},
   issn={0373-0956},
}

\bib{Clerc1}{article}{
   author={Clerc, Jean-Louis},
   title={Singular conformally invariant trilinear forms, I: The
   multiplicity one theorem},
   journal={Transform. Groups},
   volume={21},
   date={2016},
   number={3},
   pages={619--652},
   issn={1083-4362},
   review={\MR{3531743}},
}

\bib{Clerc2}{article}{
   author={Clerc, Jean-Louis},
   title={Singular conformally invariant trilinear forms, II: The higher
   multiplicity case},
   journal={Transform. Groups},
   volume={22},
   date={2017},
   number={3},
   pages={651--706},
   issn={1083-4362},
   review={\MR{3682833}},
}

\bib{DKS}{article}{
   author={Danielsen, Thomas},
   author={Kr\"otz, Bernhard},
   author={Schlichtkrull, Henrik},
   title={Decomposition theorems for triple spaces},
   journal={Geom. Dedicata},
   volume={174},
   date={2015},
   pages={145--154},
   issn={0046-5755},
}

\bib{De1}{article}{
   author={Deitmar, Anton},
   title={Invariant triple products},
   journal={Int. J. Math. Math. Sci.},
   date={2006},
   pages={Art. ID 48274, 22},
   issn={0161-1712},
   review={\MR{2251763}},
   doi={10.1155/IJMMS/2006/48274},
}

\bib{De2}{article}{
   author={Deitmar, Anton},
   title={Fourier expansion along geodesics on Riemann surfaces},
   journal={Cent. Eur. J. Math.},
   volume={12},
   date={2014},
   number={4},
   pages={559--573},
   issn={1895-1074},
}

\bib{Delorme}{article}{
   author={Delorme, Patrick},
   title={Harmonic analysis on real reductive symmetric spaces},
   conference={
      title={Proceedings of the International Congress of Mathematicians,
      Vol. II},
      address={Beijing},
      date={2002},
   },
   book={
      publisher={Higher Ed. Press, Beijing},
   },
   date={2002},
   pages={545--554},
}

\bib{Flensted-Jensen}{book}{
   author={Flensted-Jensen, Mogens},
   title={Analysis on non-Riemannian symmetric spaces},
   series={CBMS Regional Conference Series in Mathematics},
   volume={61},
   publisher={Published for the Conference Board of the Mathematical
   Sciences, Washington, DC; by the American Mathematical Society,
   Providence, RI},
   date={1986},
   pages={x+77},
   isbn={0-8218-0711-0},
   doi={10.1090/cbms/061},
}

\bib{GangolliVar}{book}{
   author={Gangolli, Ramesh},
   author={Varadarajan, V. S.},
   title={Harmonic analysis of spherical functions on real reductive groups},
   series={Ergebnisse der Mathematik und ihrer Grenzgebiete [Results in
   Mathematics and Related Areas]},
   volume={101},
   publisher={Springer-Verlag, Berlin},
   date={1988},
   pages={xiv+365},
   isbn={3-540-18302-7},
   doi={10.1007/978-3-642-72956-0},
}

\bib{KnopKrotzSchlicht}{article}{
   author={Knop, Friedrich},
   author={Kr\"{o}tz, Bernhard},
   author={Schlichtkrull, Henrik},
   title={The tempered spectrum of a real spherical space},
   journal={Acta Math.},
   volume={218},
   date={2017},
   number={2},
   pages={319--383},
   issn={0001-5962},
   review={\MR{3733102}},
   doi={10.4310/ACTA.2017.v218.n2.a3},
}

\bib{Koba}{article}{
   author={Kobayashi, Toshiyuki},
   author={Oshima, Toshio},
   title={Finite multiplicity theorems for induction and restriction},
   journal={Adv. Math.},
   volume={248},
   date={2013},
   pages={921--944},
   issn={0001-8708},
   doi={10.1016/j.aim.2013.07.015},
}

\bib{KrotzSchlicht}{article}{
   author={Kr\"{o}tz, Bernhard},
   author={Schlichtkrull, Henrik},
   title={Harmonic analysis for real spherical spaces},
   journal={Acta Math. Sin. (Engl. Ser.)},
   volume={34},
   date={2018},
   number={3},
   pages={341--370},
   issn={1439-8516},
   doi={10.1007/s10114-017-6557-9},
}

\bib{KSS}{article}{
   author={Kr\"otz, Bernhard},
   author={Sayag, Eitan},
   author={Schlichtkrull, Henrik},
   title={The harmonic analysis of lattice counting on real spherical
   spaces},
   journal={Doc. Math.},
   volume={21},
   date={2016},
   pages={627--660},
   issn={1431-0635},
}

\bib{MOO}{article}{
   author={M\"ollers, Jan},
   author={\O rsted, Bent},
   author={Oshima, Yoshiki},
   title={Knapp-Stein type intertwining operators for symmetric pairs},
   journal={Adv. Math.},
   volume={294},
   date={2016},
   pages={256--306},
   issn={0001-8708},
   review={\MR{3479564}},
}

\bib{MO}{article}{
   author={M\"ollers, Jan},
   author={\O rsted, Bent},
   title={Estimates for the restriction of automorphic forms on hyperbolic
   manifolds to compact geodesic cycles},
   journal={Int. Math. Res. Not. IMRN},
   date={2017},
   number={11},
   pages={3209--3236},
   issn={1073-7928},
   review={\MR{3693648}},
}

\bib{Rudin}{book}{
   author={Rudin, Walter},
   title={Functional analysis},
   series={International Series in Pure and Applied Mathematics},
   edition={2},
   publisher={McGraw-Hill, Inc., New York},
   date={1991},
   pages={xviii+424},
   isbn={0-07-054236-8},
}

\bib{Terras}{book}{
   author={Terras, Audrey},
   title={Harmonic analysis on symmetric spaces and applications. II},
   publisher={Springer-Verlag, Berlin},
   date={1988},
   pages={xii+385},
   isbn={3-540-96663-3},
   doi={10.1007/978-1-4612-3820-1},
}

\bib{Warner}{book}{
   author={Warner, Garth},
   title={Harmonic analysis on semi-simple Lie groups. II},
   note={Die Grundlehren der mathematischen Wissenschaften, Band 189},
   publisher={Springer-Verlag, New York-Heidelberg},
   date={1972},
   pages={viii+491},
}

\end{biblist} \end{bibdiv}

{\small Mathematisches Institut\\
Auf der Morgenstelle 10\\
72076 T\"ubingen\\
Germany\\
\tt deitmar@uni-tuebingen.de}

\today

\end{document}